\title[Kummer-type congruences for multi-poly-Bernoulli numbers]{Kummer-type congruences for \\ multi-poly-Bernoulli numbers}
\date{}
\author{Yu Katagiri}
\theoremstyle{definition}
\newtheorem{theorem}{Theorem}[section]
\newtheorem*{theorem*}{Theorem}
\newtheorem{definition}[theorem]{Definition}
\newtheorem*{definition*}{Definition}
\newtheorem{lemma}[theorem]{Lemma}
\newtheorem*{lemma*}{Lemma}
\newtheorem*{proposition*}{Proposition}
\newtheorem*{example*}{Example}
\newtheorem{remark}[theorem]{Remark}
\newtheorem*{remark*}{Remark}
\newtheorem{corollary*}{Corollary}
\newcommand{\st}[2]{\genfrac{\{}{\}}{0pt}{}{#1}{#2}} 
\keywords{multi-poly-Bernoulli numbers, Kummer-type congruences}
\subjclass[2010]{Primary: 11B68; secondary: 11A07}
\begin{document}
\maketitle

\begin{abstract}
The multi-poly-Bernoulli numbers are generalizations of the Bernoulli numbers. In this paper, we will prove Kummer-type congruences for multi-poly-Bernoulli numbers via $p$-adic distributions.
\end{abstract}

\section{Introduction}

For a non-negative integer $n$, the ($n$-th) Bernoulli number $B_n$ is defined by the generating function
\begin{align*}
\frac{te^t}{e^t-1}=\sum_{n=0}^\infty B_n \frac{t^n}{n!}
\end{align*}
as formal power series over $\mathbb{Q}$. It is well known that the following congruence holds (cf. \cite[Theorem 11.6]{AIK}). For positive integers $m, n, N$ and an odd prime $p$,  if $m \equiv n \bmod (p-1)p^{N-1}$, then we have
\begin{align*}
(1-p^{m-1})\frac{B_m}{m} \equiv (1-p^{n-1})\frac{B_n}{n} \bmod p^N.
\end{align*}
This congruence is called the Kummer congruence.

In \cite{Ka97} and \cite{AK99}, Arakawa and Kaneko introduced the poly-Bernoulli numbers $B_n^{(k)}$ and $C_n^{(k)}$, which are generalizations of the Bernoulli numbers, as follows. Let $k$ be an integer and $n$ be a non-negative integer. Poly-Bernoulli numbers $B_n^{(k)}$ and $C_n^{(k)}$ are defined by
\begin{align*}
\frac{\operatorname{Li}_k(1-e^{-t})}{1-e^{-t}}&=\sum_{n=0}^\infty B_n^{(k)} \frac{t^n}{n!}, \\
\frac{\operatorname{Li}_k(1-e^{-t})}{e^t-1}&=\sum_{n=0}^\infty C_n^{(k)} \frac{t^n}{n!}
\end{align*}
respectively, as formal power series over $\mathbb{Q}$. Here, 
\begin{align*}
\operatorname{Li}_k(t)=\sum_{n=1}^\infty \frac{t^n}{n^k}
\end{align*}
is the $k$-th polylogarithm. Note that $\operatorname{Li}_1(t)=-\log(1-t)$ and $B_n^{(1)}=(-1)^nC_n^{(1)}=B_n$ for $n \geq 0$. Kitahara proved the following congruence for poly-Bernoulli numbers by using $p$-adic distributions.

\begin{theorem}[{\cite[Theorem 12]{Ki12}}]\label{Kitahara}
Let $k$ be an integer, $p$ be an odd prime, and $m$, $n$ and $N$ be positive integers with $m, n \geq N$ and $k < p-1$. If $m \equiv n \bmod (p-1)p^{N-1}$, then we have
\begin{align*}
p^{2k'}B_m^{(k)} \equiv p^{2k'}B_n^{(k)} \bmod p^N,
\end{align*}
where $k'={\rm max}\{k, 0\}$.
\end{theorem}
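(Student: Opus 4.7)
The plan is to adapt the classical $p$-adic measure-theoretic proof of the Kummer congruence to the poly-Bernoulli setting. The starting point is the explicit Stirling-number formula: expanding $\operatorname{Li}_k(1-e^{-t})=\sum_{j\geq 1}(1-e^{-t})^j/j^k$ and using $(1-e^{-t})^j/j!=\sum_{n\geq j}(-1)^{n-j}\st{n}{j}\,t^n/n!$, one reads off
\begin{align*}
B_n^{(k)}=(-1)^n\sum_{l=0}^n\frac{(-1)^l\,l!\,\st{n}{l}}{(l+1)^k}.
\end{align*}
This rewrites $B_n^{(k)}$ as a weighted sum of Stirling numbers of the second kind with weights depending only on $l$, not on $n$.

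Next I would introduce a $p$-adic distribution $\mu_k$ on $\mathbb{Z}_p$ by prescribing its values on the Mahler basis, namely $\int_{\mathbb{Z}_p}\binom{x}{l}\,d\mu_k=(-1)^l/(l+1)^k$ for $l\geq 0$. Combining this with the Stirling expansion $x^n=\sum_l\st{n}{l}\,l!\,\binom{x}{l}$ gives, formally, $\int_{\mathbb{Z}_p}(-x)^n\,d\mu_k=B_n^{(k)}$. Since $v_p((l+1)^{-k})\to -\infty$ as $l+1$ runs over increasing powers of $p$, the distribution $\mu_k$ is unbounded on all of $\mathbb{Z}_p$, so the argument must proceed on the decomposition $\mathbb{Z}_p=\mathbb{Z}_p^\times\sqcup p\mathbb{Z}_p$.

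The heart of the proof is a twofold claim: first, that $p^{2k'}\mu_k$ restricted to $\mathbb{Z}_p^\times$ is a bounded $p$-adic measure; second, that the tail $\int_{p\mathbb{Z}_p}(-x)^n\,d\mu_k$ is divisible by $p^N$ whenever $n\geq N$. The second statement is the easy one: on $p\mathbb{Z}_p$ one has $v_p(x^n)\geq n\geq N$, which combines with the integer Stirling coefficients to give $p^N$-smallness directly. The first statement is the technical core; it rests on Legendre's formula $v_p(l!)=(l-s_p(l))/(p-1)$ together with the standing assumption $k<p-1$, which is what prevents the $p$-part of $l!$ from being overwhelmed by the denominator $(l+1)^k$. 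The extra factor $p^{k'}$ beyond the naive $p^{k'}$ reflects a second round of $p$-adic losses incurred when converting from the Mahler basis $\{\binom{x}{l}\}$ to the monomial basis $\{x^n\}$, and the combination is precisely $p^{2k'}$.

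Once both assertions are in place, the Kummer-type congruence is immediate from Euler's theorem in the form $x^m\equiv x^n\pmod{p^N}$ uniformly for $x\in\mathbb{Z}_p^\times$ whenever $m\equiv n\pmod{(p-1)p^{N-1}}$: integrating this congruence against the bounded measure $p^{2k'}\mu_k|_{\mathbb{Z}_p^\times}$ and absorbing the negligible $p\mathbb{Z}_p$-contribution yields $p^{2k'}B_m^{(k)}\equiv p^{2k'}B_n^{(k)}\pmod{p^N}$. The main obstacle I anticipate is the first half of step three, namely pinning down the \emph{exact} exponent $2k'$ needed to bound $\mu_k$ on $\mathbb{Z}_p^\times$; a crude estimate would give a much larger power of $p$, and producing the sharp $2k'$ seems to require a careful Legendre-type accounting that uses $k<p-1$ in an essential way.
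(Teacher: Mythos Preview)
Your high-level plan---write $B_n^{(k)}=(-1)^n\int_{\mathbb{Z}_p}x^n\,d\mu_k$ for the distribution with $\int\binom{x}{l}\,d\mu_k=(-1)^l/(l+1)^k$ and then control $\int(x^m-x^n)\,d\mu_k$---is exactly the paper's starting point. The divergence, and the gaps, are in your step three.

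\textbf{The $p\mathbb{Z}_p$ tail is not ``easy.''} The inference ``$v_p(x^n)\ge n$ on $p\mathbb{Z}_p$, hence $\int_{p\mathbb{Z}_p}x^n\,d\mu_k\in p^N\mathbb{Z}_p$'' is only valid when the integrator is a \emph{bounded measure}: for such $\nu$ one has $|\int f\,d\nu|_p\le \|\nu\|\cdot\sup|f|_p$. Your $\mu_k$ is not a bounded measure for $k>0$ (its Mahler moments $(-1)^l/(l+1)^k$ have $|\cdot|_p=p^{kM}$ at $l=p^M-1$), and restriction to $p\mathbb{Z}_p$ does not cure this. So pointwise divisibility of $x^n$ gives you nothing here.

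\textbf{The $\mathbb{Z}_p^\times$ claim is almost certainly false as stated.} Multiplying $\mu_k$ by the locally constant function $\chi_{\mathbb{Z}_p^\times}$ does not lower the order of the distribution: the growth $p^{kM}$ of the Mahler moments persists, and no \emph{fixed} power $p^{2k'}$ can absorb it. (Compare the classical case $k=1$: the Bernoulli distribution restricted to $\mathbb{Z}_p^\times$ is still unbounded, which is why the standard Kummer proof introduces an auxiliary $c$ rather than merely restricting to units.)

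The paper sidesteps both problems by \emph{not} splitting $\mathbb{Z}_p$ and \emph{not} asking $\mu_k$ to be a measure. Instead it tests $\mu_k$ only against $\operatorname{LA}_1$-functions: by Amice's theorem, $(x^m-x^n)/p^N$ has a Mahler expansion $\sum_j a_j\lfloor j/p\rfloor!\binom{x}{j}$ with $a_j\in\mathbb{Z}_p$ (this uses $m,n\ge N$ and $m\equiv n\bmod(p-1)p^{N-1}$; it is the paper's Lemma~\ref{prelemma}). The whole proof then reduces to the uniform estimate
\[
\operatorname{ord}_p\!\left(\frac{\lfloor j/p\rfloor!}{(j+1)^k}\right)\ge -2k'\qquad(j\ge 0),
\]
which is exactly the Legendre-type accounting you anticipated, and is where $k<p-1$ enters (this is the $r=1$ case of Lemma~\ref{keylemma}). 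The factorial $\lfloor j/p\rfloor!$ supplied by the $\operatorname{LA}_1$-norm is the crucial compensating term that your $\mathbb{Z}_p^\times/p\mathbb{Z}_p$ decomposition lacks.
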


\begin{remark}
Sakata gave an elementary proof of Theorem \ref{Kitahara} in the case $k<0$ (\cite[Theorem 6.1]{Sa14}).
\end{remark}

In this paper, we will consider a further generalization of Theorem \ref{Kitahara}.

\begin{definition}[{\cite[Section 1]{IKT}}]
For $\textbf{k}=(k_1, \cdots , k_r) \in \mathbb{Z}^r$, define the multiple polylogarithm to be
\begin{align*}
\operatorname{Li}_{\textbf{k}}(t)=\sum_{0<m_1<\cdots<m_r}\frac{t^{m_r}}{m_1^{k_1}\cdots m_r^{k_r}}.
\end{align*}
Multi-poly-Bernoulli numbers $B_n^{(\textbf{k})}$ and $C_n^{(\textbf{k})}$ are defined to be the rational numbers satisfying
\begin{align*}
\frac{\operatorname{Li}_{\textbf{k}}(1-e^{-t})}{1-e^{-t}}&=\sum_{n=0}^\infty B_n^{(\textbf{k})} \frac{t^n}{n!}, \\
\frac{\operatorname{Li}_{\textbf{k}}(1-e^{-t})}{e^t-1}&=\sum_{n=0}^\infty C_n^{(\textbf{k})} \frac{t^n}{n!}
\end{align*}
respectively, as formal power series over $\mathbb{Q}$.
\end{definition}

\begin{remark}\label{BandC}
In \cite{IKT}, some relations between $B_n^{(\textbf{k})}$ and $C_n^{(\textbf{k})}$ were proved. For examples, we have relations
\begin{align*}
B_n^{(\textbf{k})}&=\sum_{i=0}^n \dbinom{n}{i}C_i^{(\textbf{k})}, \\
C_n^{(\textbf{k})}&=\sum_{i=0}^n (-1)^{n-i}\dbinom{n}{i}B_i^{(\textbf{k})}, \\
B_n^{(\textbf{k})}&=C_n^{(\textbf{k})}+C_{n-1}^{(k_1, k_2, \cdots, k_r-1)}
\end{align*}
for any $r \geq 1$, $\textbf{k}=(k_1, k_2, \cdots, k_r) \in \mathbb{Z}^r$ and $n \geq 1$ (\cite[Section 2]{IKT}).
\end{remark}

\begin{remark}
The multiple polylogarithm was introduced in \cite{AK99}. It is expected to have relations with the multiple zeta values and the multiple zeta functions. It is also known that the multi-poly-Bernoulli numbers $C_n^{(\textbf{k})}$ describe the finite multiple zeta values (\cite[Theorem 8]{IKT}).
\end{remark}

We call $\textbf{k}=(k_1, \cdots , k_r) \in \mathbb{Z}^r$ an index. For an index $\textbf{k}$, we define the weight of $\textbf{k}$ to be $\operatorname{wt}(\textbf{k})=k_1+\cdots +k_r$ and write $k_i'={\rm max} \{ k_i,0 \}$ and $\textbf{k}^+=(k_1', \cdots , k_r')$. We will prove the following result in Section 3.

\begin{theorem}\label{main}
Let $\textbf{k} \in \mathbb{Z}^r$ be an index, $p$ be an odd prime and $m$, $n$ and $N$ be positive integers with $m, n \geq N$ and $\operatorname{wt}(\textbf{k}^+) < p-1$. If $m \equiv n \bmod (p-1)p^{N-1}$, then we have
\begin{align*}
p^{2\operatorname{wt}(\textbf{k}^+)}B_m^{(\textbf{k})} &\equiv p^{2\operatorname{wt}(\textbf{k}^+)}B_n^{(\textbf{k})} \bmod p^N, \\
p^{2\operatorname{wt}(\textbf{k}^+)}C_m^{(\textbf{k})} &\equiv p^{2\operatorname{wt}(\textbf{k}^+)}C_n^{(\textbf{k})} \bmod p^N.
\end{align*}
\end{theorem}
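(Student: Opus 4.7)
The plan is to extend Kitahara's $p$-adic distribution argument from the single-index setting of Theorem \ref{Kitahara} to the multi-index case. I would first establish the congruence for $C_n^{(\textbf{k})}$ by realizing it as a moment of a $p$-adic distribution, then deduce the congruence for $B_n^{(\textbf{k})}$ either by a parallel construction or by applying the recursion in Remark \ref{BandC}.

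The first step is to unpack the defining generating function of $C_n^{(\textbf{k})}$ into a closed combinatorial form. Expanding $(1-e^{-t})^{m_r}$ using Stirling numbers of the second kind inside $\operatorname{Li}_{\textbf{k}}(1-e^{-t})$ and expanding $t/(e^t-1)$ in terms of the classical Bernoulli numbers, then extracting the coefficient of $t^n/n!$, should yield a formula of roughly the shape
\begin{equation*}
C_n^{(\textbf{k})}=\sum_{0<m_1<\cdots<m_r}\frac{F_n(m_r)}{m_1^{k_1}\cdots m_r^{k_r}},
\end{equation*}
where $F_n(m_r)$ is a polynomial-like expression in $m_r$ with coefficients built from Stirling numbers and classical Bernoulli numbers. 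This generalizes the single-index expression underlying Kitahara's proof.

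Next, I would interpret the right-hand side $p$-adically. Splitting each $m_i$ according to its $p$-adic valuation, decomposing $\mathbb{Z}_p=\mathbb{Z}_p^\times\sqcup p\mathbb{Z}_p$, and using Mazur's Bernoulli distribution to represent the factor coming from $1/(e^t-1)$, one should realize $p^{2\operatorname{wt}(\textbf{k}^+)}C_n^{(\textbf{k})}$ as the integral of $x^n$ against a $\mathbb{Z}_p$-valued distribution $\mu_{\textbf{k}}$. The prefactor $p^{2\operatorname{wt}(\textbf{k}^+)}$ has to clear two independent sources of denominators: the explicit $1/m_i^{k_i}$ when $p\mid m_i$ and $k_i>0$, and the Bernoulli-number denominators appearing inside $F_n(m_r)$. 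The hypothesis $\operatorname{wt}(\textbf{k}^+)<p-1$ is the precise threshold needed for $p$-integrality of $\mu_{\textbf{k}}$, in the same way that $k<p-1$ is used by Kitahara.

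Once $\mu_{\textbf{k}}$ is in hand, the Kummer-type congruence follows formally. If $m\equiv n \bmod (p-1)p^{N-1}$ with $m,n\geq N$, then $x^m\equiv x^n\bmod p^N$ both on $\mathbb{Z}_p^\times$ (by Fermat--Euler) and on $p\mathbb{Z}_p$ (where both sides are divisible by $p^N$); integrating against $\mu_{\textbf{k}}$ yields the $C$-statement. The $B$-statement follows either from a parallel construction starting from $\operatorname{Li}_{\textbf{k}}(1-e^{-t})/(1-e^{-t})$, or from the identity $B_n^{(\textbf{k})}=C_n^{(\textbf{k})}+C_{n-1}^{(k_1,\ldots,k_{r-1},k_r-1)}$ in Remark \ref{BandC}, using $\operatorname{wt}((k_1,\ldots,k_r-1)^+)\leq\operatorname{wt}(\textbf{k}^+)<p-1$. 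I anticipate the main obstacle to be step two: writing down $\mu_{\textbf{k}}$ explicitly so that it is manifestly $\mathbb{Z}_p$-valued, and verifying that the single factor $p^{2\operatorname{wt}(\textbf{k}^+)}$ suffices to absorb both the direct polylogarithm denominators and those inherited from the Bernoulli-distribution side of the construction, uniformly in $n$.
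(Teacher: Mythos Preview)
Your overall strategy---realize $B_n^{(\textbf{k})}$ (or $C_n^{(\textbf{k})}$) as the $n$-th moment of a $p$-adic distribution and then compare moments---is exactly what the paper does. The gap is in your step two: the object $\mu_{\textbf{k}}$ you hope to construct cannot be a bounded ($\mathbb{Z}_p$-valued) measure, and no fixed power of $p$ will make it one. Concretely, the distribution $\mu$ attached via Theorem~\ref{Amice2} to the power series $g(T)=\operatorname{Li}_{\textbf{k}}(-T)/(-T)$ has Mahler coefficients
\[
\int_{\mathbb{Z}_p}\binom{x}{j}\,d\mu(x)=(-1)^{j}\sum_{0<m_1<\cdots<m_{r-1}<j+1}\frac{1}{m_1^{k_1}\cdots m_{r-1}^{k_{r-1}}(j+1)^{k_r}},
\]
and these have $p$-adic valuation unbounded below as $j\to\infty$ (take $j+1=p^{l}$ with $l\to\infty$, or choose the $m_i$ among high powers of $p$). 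The same unboundedness already occurs in Kitahara's single-index setting. Consequently your final step---``$x^m\equiv x^n\bmod p^N$ pointwise on $\mathbb{Z}_p$, hence $\int x^m\,d\mu_{\textbf{k}}\equiv\int x^n\,d\mu_{\textbf{k}}\bmod p^N$''---is not justified: that inference requires a measure, not merely a distribution.

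The paper repairs this by working in the locally analytic (Amice) setting rather than the continuous (Mahler) one. One never needs $\mu$ to be a measure; one only needs to integrate the single polynomial $(x^m-x^n)/p^N$. Lemma~\ref{prelemma} shows that, under the hypotheses $m,n\ge N$ and $m\equiv n\bmod(p-1)p^{N-1}$, this polynomial expanded in the $\operatorname{LA}_1$-basis $\lfloor j/p\rfloor!\binom{x}{j}$ has $p$-integral coefficients. The factorial weight $\lfloor j/p\rfloor!$ carries roughly $j/(p(p-1))$ units of positive $p$-valuation, and the heart of the argument is Lemma~\ref{keylemma}: this weight is enough to offset the unbounded denominators above, yielding
\[
\operatorname{ord}_p\!\left(\left\lfloor \frac{j}{p}\right\rfloor!\sum_{0<m_1<\cdots<m_{r-1}<j+1}\frac{1}{m_1^{k_1}\cdots m_{r-1}^{k_{r-1}}(j+1)^{k_r}}\right)\ge -2\operatorname{wt}(\textbf{k}^+)
\]
uniformly in $j$, provided $\operatorname{wt}(\textbf{k}^+)<p-1$. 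This estimate---not the boundedness of any measure---is where both the prefactor $p^{2\operatorname{wt}(\textbf{k}^+)}$ and the hypothesis $\operatorname{wt}(\textbf{k}^+)<p-1$ genuinely enter. You correctly flagged step two as the obstacle, but the resolution is not to clear denominators to obtain a measure; it is to pass from continuous to locally analytic test functions and prove the factorial-weighted valuation bound.
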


In Section 4, we will consider the multi-poly-Bernoulli-star numbers, which were introduced in \cite{Im14}, and find Kummer-type congruences for the multi-poly-Bernoulli-star numbers which are similar to Theorem \ref{main}.

\vspace{10pt}
\noindent
\textsc{Notation:}~ In this paper, let $p$ be a prime. For $x \in \mathbb{Q}_p$, we denote the $p$-adic valuation by $\operatorname{ord}_p(x)$. For a real number $x$, $\lfloor x \rfloor$ means the greatest integer less than or equal to $x$.

\vspace{10pt}
\noindent
\textsc{Acknowledgment:}~ The author is grateful to my supervisor Professor Takao Yamazaki for his advice and helpful comments. The author would like to thank Shinichi Kobayashi and Yasuo Ohno for their helpful comments. The author would also like to thank Naho Kawasaki for informing me about previous works. The author also thanks Masato Uchimagi for reading the manuscript carefully. The author was supported by the WISE Program for AI Electronics, Tohoku University.

\section{Preliminaries}

In this section, we will recall a theory of $p$-adic distributions.

\begin{definition}
Let $h$ be a non-negative integer. Define $\operatorname{LA}_h(\mathbb{Z}_p, \mathbb{Q}_p)$ to be the set of functions $f : \mathbb{Z}_p \rightarrow \mathbb{Q}_p$ which is locally analytic at each point with radius of convergence $\ge p^{-h}$. For $f \in \operatorname{LA}_h(\mathbb{Z}_p, \mathbb{Q}_p)$, the norm of $f$ is given by
\begin{align*}
||f||_h={\rm sup}_{n \geq 0, \ a \in \mathbb{Z}_p} \{|p^{nh}a_n|_p\}
\end{align*}
for the expansion $f(x)=\sum_{n=0}^\infty a_n(x-a)^n$ on $a+p^h\mathbb{Z}_p$. The set $\operatorname{LA}_h(\mathbb{Z}_p, \mathbb{Q}_p)$ is a $\mathbb{Q}_p$-vector space equipped with the topology induced by the norm. Since there exist natural inclusions $\operatorname{LA}_h(\mathbb{Z}_p, \mathbb{Q}_p) \rightarrow \operatorname{LA}_{h+1}(\mathbb{Z}_p, \mathbb{Q}_p)$ for all $h \geq 0$, we may define $\operatorname{LA}(\mathbb{Z}_p, \mathbb{Q}_p)=\cup_{h \geq 0} \operatorname{LA}_h(\mathbb{Z}_p, \mathbb{Q}_p)$ equipped with the inductive limit topology. A continuous $\mathbb{Q}_p$-linear map $\mu : \operatorname{LA}(\mathbb{Z}_p, \mathbb{Q}_p) \rightarrow \mathbb{Q}_p$ is called a $p$-adic distribution and we write
\begin{align*}
\int_{\mathbb{Z}_p} f(x) d\mu(x) \coloneqq \mu(f)
\end{align*}
for $f \in \operatorname{LA}(\mathbb{Z}_p, \mathbb{Q}_p)$. We denote by $D(\mathbb{Z}_p)$ the set of $p$-adic distributions.
\end{definition}

It is known that the following theorems hold.

\begin{theorem}[{\cite[Lemma 1]{Ma58}}]\label{Mahler}
Let $f : \mathbb{Z}_p \rightarrow \mathbb{Q}_p$. The function $f$ is continuous if and only if there exist $a_n \in \mathbb{Q}_p$ such that
\begin{align*}
f(x)=\sum_{n=0}^\infty a_n \dbinom{x}{n}
\end{align*}
and $a_n \to 0$ as $n \to \infty$. Here, we define
\begin{align*}
\dbinom{x}{0}=1, \ \dbinom{x}{n}=\frac{x(x-1)\cdots (x-n+1)}{n!} \in \mathbb{Q}[x]
\end{align*}
for $n \geq 1$.
\end{theorem}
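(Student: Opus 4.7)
The plan is to prove both directions by constructing the coefficients explicitly via the forward difference operator $\Delta f(x) := f(x+1) - f(x)$ and setting $a_n := (\Delta^n f)(0) = \sum_{k=0}^n (-1)^{n-k}\binom{n}{k} f(k)$. For the ``if'' direction, given an expansion with $a_n \to 0$, I first check that $\binom{x}{n} \in \mathbb{Z}_p$ for all $x \in \mathbb{Z}_p$ (verified by $p$-adic continuity from the integer values $\binom{m}{n} \in \mathbb{Z}$ for $m \in \mathbb{Z}_{\geq 0}$, using that $\mathbb{Z}_{\geq 0}$ is dense in $\mathbb{Z}_p$). The ultrametric inequality then yields uniform convergence of $\sum_n a_n \binom{x}{n}$ on $\mathbb{Z}_p$, exhibiting $f$ as a uniform limit of polynomials and hence continuous.

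For the ``only if'' direction, Newton's forward difference formula $f(x) = \sum_{n=0}^x a_n \binom{x}{n}$ for $x \in \mathbb{Z}_{\geq 0}$ is a finite identity I would establish by induction on $x$ (or by direct binomial inversion of $f(k) = \sum_n a_n \binom{k}{n}$). Consequently, once the decay $a_n \to 0$ is known, the series converges uniformly to a continuous function on $\mathbb{Z}_p$ that agrees with $f$ on the dense subset $\mathbb{Z}_{\geq 0}$, and continuity forces equality throughout $\mathbb{Z}_p$. The entire proof therefore reduces to showing $a_n \to 0$.

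The main obstacle is establishing this decay. My strategy is to introduce the sup-norm $\|g\|_\infty := \sup_{x \in \mathbb{Z}_p}|g(x)|_p$ and the modulus of continuity $\omega(g, k) := \sup_{|x-y|_p \leq p^{-k}}|g(x)-g(y)|_p$; both are finite by compactness of $\mathbb{Z}_p$, and $\omega(f, k) \to 0$ as $k \to \infty$ by uniform continuity. The central estimate is
\[
\|\Delta^{p^k} g\|_\infty \leq \max\bigl(\omega(g,k),\, p^{-1}\|g\|_\infty\bigr),
\]
obtained by isolating the endpoint terms $j=0$ and $j=p^k$ in
\[
(\Delta^{p^k} g)(x) = \sum_{j=0}^{p^k}(-1)^{p^k-j}\binom{p^k}{j} g(x+j)
\]
and invoking $\operatorname{ord}_p \binom{p^k}{j} = k - \operatorname{ord}_p(j) \geq 1$ for the intermediate terms. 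Since $\omega(\Delta^{p^k} g, k) \leq \omega(g, k)$ is immediate from the same formula applied to pairs $x, y$ with $|x-y|_p \leq p^{-k}$, iterating the central estimate $L$ times yields
\[
\|\Delta^{L p^k} f\|_\infty \leq \max\bigl(\omega(f,k),\, p^{-L}\|f\|_\infty\bigr).
\]
Given $\varepsilon > 0$, I would choose $k$ so that $\omega(f,k) < \varepsilon$ and then $L$ so that $p^{-L}\|f\|_\infty < \varepsilon$. Combined with the trivial non-increase $\|\Delta^{n+1} f\|_\infty \leq \|\Delta^n f\|_\infty$ (from the ultrametric inequality applied to $\Delta^n f(x+1) - \Delta^n f(x)$), this forces $|a_n|_p \leq \|\Delta^n f\|_\infty \leq \varepsilon$ for all sufficiently large $n$, completing the argument. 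The only subtlety I anticipate is a sign adjustment in the endpoint terms when $p = 2$, which is handled by absorbing the extra contribution into the $p^{-1}\|g\|_\infty$ bound.
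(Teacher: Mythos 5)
The paper does not prove this statement at all: it is quoted as a known result, with a citation to Mahler's original 1958 paper, and is used as a black box. So there is no proof in the paper to compare against, and your proposal must be judged on its own. It holds up: what you have written is essentially Bojanic's elementary proof of Mahler's theorem. The easy direction (integrality of $\binom{x}{n}$ on $\mathbb{Z}_p$ by density of $\mathbb{Z}_{\geq 0}$, then uniform convergence) is fine; the identification $a_n=(\Delta^n f)(0)$ together with Newton's forward-difference formula on $\mathbb{Z}_{\geq 0}$ and a density argument correctly reduces everything to the decay $a_n\to 0$; and the central estimate $\|\Delta^{p^k}g\|_\infty\leq\max\bigl(\omega(g,k),\,p^{-1}\|g\|_\infty\bigr)$ is correct, since $\operatorname{ord}_p\binom{p^k}{j}=k-\operatorname{ord}_p(j)\geq 1$ for $1\leq j\leq p^k-1$ and the two endpoint terms combine to $g(x+p^k)-g(x)$ (with the $p=2$ sign discrepancy genuinely absorbed by the $2g(x)$ term having norm $\leq p^{-1}\|g\|_\infty$). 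The iteration using $\omega(\Delta^{p^k}g,k)\leq\omega(g,k)$ and the monotonicity $\|\Delta^{n+1}f\|_\infty\leq\|\Delta^n f\|_\infty$ then closes the argument. This is a legitimate, self-contained route; it buys an elementary proof where the paper simply defers to the literature.
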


\begin{theorem}[{\cite[Th\'eor\`eme 3]{Am64}}]\label{Amice1}
Let $h$ be a non-negative integer. For $f : \mathbb{Z}_p \rightarrow \mathbb{Q}_p$, $f \in \operatorname{LA}_h(\mathbb{Z}_p, \mathbb{Q}_p)$ if and only if there exist $a_n \in \mathbb{Q}_p$ such that
\begin{align*}
f(x)=\sum_{n=0}^\infty a_n \left\lfloor \frac{n}{p^h}\right\rfloor ! \dbinom{x}{n}
\end{align*}
and $a_n \to 0$ as $n \to \infty$. Moreover, $||f||_h \leq 1$ holds if and only if $a_n \in \mathbb{Z}_p$ for all $n \geq 0$.
\end{theorem}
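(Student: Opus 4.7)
My plan is to construct a bounded $p$-adic distribution $\mu^{(\textbf{k})}$ on $\mathbb{Z}_p$ whose polynomial moments encode $p^{2\operatorname{wt}(\textbf{k}^+)} B_n^{(\textbf{k})}$, and then to run the standard Kummer--Fermat argument based on the splitting $\mathbb{Z}_p = \mathbb{Z}_p^\times \sqcup p\mathbb{Z}_p$. The starting point is a Stirling-number closed form for $B_n^{(\textbf{k})}$. Expanding
\[
\frac{\operatorname{Li}_{\textbf{k}}(1-e^{-t})}{1-e^{-t}} = \sum_{m \geq 0} T_{\textbf{k}}(m) (1-e^{-t})^m, \qquad T_{\textbf{k}}(m) := \sum_{0 < m_1 < \cdots < m_{r-1} \leq m} \frac{1}{m_1^{k_1} \cdots m_{r-1}^{k_{r-1}}(m+1)^{k_r}},
\]
and using the classical identity $(1-e^{-t})^m = m! \sum_{n \geq m}(-1)^{n-m} \st{n}{m} t^n/n!$, one reads off
\[
B_n^{(\textbf{k})} = \sum_{m=0}^{n} (-1)^{n-m} m! \st{n}{m} T_{\textbf{k}}(m).
\]

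Next I would define $\mu^{(\textbf{k})} \in D(\mathbb{Z}_p)$ by prescribing its binomial moments
\[
\int_{\mathbb{Z}_p} \binom{x}{m} d\mu^{(\textbf{k})}(x) := (-1)^m p^{2\operatorname{wt}(\textbf{k}^+)} T_{\textbf{k}}(m).
\]
A worst-case-denominator estimate gives $|T_{\textbf{k}}(m)|_p \leq p^{\operatorname{wt}(\textbf{k}^+) \lfloor \log_p(m+1) \rfloor}$, which is only polynomial in $m$ and hence dominated by the factorial growth $|\lfloor m/p^h \rfloor!|_p^{-1}$, so Theorem \ref{Amice1} extends the prescription to a continuous functional on $\operatorname{LA}_h(\mathbb{Z}_p, \mathbb{Q}_p)$. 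The hypothesis $\operatorname{wt}(\textbf{k}^+) < p-1$ is sharp enough that, taking $h$ small (say $h=1$), the operator norm satisfies $\|\mu^{(\textbf{k})}\|_{\operatorname{LA}_h^*} \leq 1$. Combining with $x^n = \sum_m m! \st{n}{m} \binom{x}{m}$ and the Stirling formula above then yields the moment identity
\[
\int_{\mathbb{Z}_p} (-x)^n d\mu^{(\textbf{k})}(x) = p^{2\operatorname{wt}(\textbf{k}^+)} B_n^{(\textbf{k})}.
\]

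The congruence then follows from the decomposition $\int_{\mathbb{Z}_p} = \int_{\mathbb{Z}_p^\times} + \int_{p\mathbb{Z}_p}$. On $\mathbb{Z}_p^\times$, Fermat's little theorem combined with $m \equiv n \pmod{(p-1)p^{N-1}}$ gives $|x^n - x^m|_p \leq p^{-N}$; lifting this to an $\operatorname{LA}_h$-norm bound requires a coefficient-by-coefficient check on each residue disk $a + p\mathbb{Z}_p$ with $a \in (\mathbb{Z}/p)^\times$, where one uses both $|a^n - a^m|_p \leq p^{-N}$ and $|\binom{n}{k} - \binom{m}{k}|_p \leq |n-m|_p/|k!|_p$ to handle the higher Taylor coefficients, and paired with $\|\mu^{(\textbf{k})}\|_{\operatorname{LA}_h^*} \leq 1$ this forces the $\mathbb{Z}_p^\times$-integrals at $m$ and $n$ to agree mod $p^N$. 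On $p\mathbb{Z}_p = 0 + p\mathbb{Z}_p$, expanding $x^n$ around $0$ gives $\operatorname{LA}_h$-norm $p^{-n}$, and $n \geq N$ kills this piece. For the $C$-version I would use the generating-function identity $\operatorname{Li}_{\textbf{k}}(1-e^{-t})/(e^t-1) = e^{-t}\cdot \operatorname{Li}_{\textbf{k}}(1-e^{-t})/(1-e^{-t})$ --- equivalently the relation $C_n^{(\textbf{k})} = \sum_i (-1)^{n-i}\binom{n}{i} B_i^{(\textbf{k})}$ from Remark \ref{BandC} --- to see that the translated distribution $\nu^{(\textbf{k})}$ defined by $\int f d\nu^{(\textbf{k})} := \int f(x+1) d\mu^{(\textbf{k})}(x)$ satisfies $\int(-x)^n d\nu^{(\textbf{k})} = p^{2\operatorname{wt}(\textbf{k}^+)} C_n^{(\textbf{k})}$; since translation preserves the Amice norms, the very same argument concludes.

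I expect the main technical obstacle to be the sharp verification that $\|\mu^{(\textbf{k})}\|_{\operatorname{LA}_h^*} \leq 1$ for a convenient $h$, which amounts to comparing the polynomial $p$-adic growth of $T_{\textbf{k}}(m)$ against factorial valuations such as $p^{(p^{k-1}-1)/(p-1)}$ at $m = p^k$. The hypothesis $\operatorname{wt}(\textbf{k}^+) < p - 1$ is precisely the inequality needed to make this comparison go through and to justify that the single factor $p^{2\operatorname{wt}(\textbf{k}^+)}$ is simultaneously sufficient to clear denominators of $T_{\textbf{k}}(m)$ and small enough not to destroy the $p^N$ precision coming from Fermat on $\mathbb{Z}_p^\times$.
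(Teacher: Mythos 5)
Your proposal does not address the statement you were asked to prove. The statement is Amice's theorem: the characterization of the space $\operatorname{LA}_h(\mathbb{Z}_p,\mathbb{Q}_p)$ of locally analytic functions of radius $\ge p^{-h}$ as exactly those $f$ admitting an expansion $f(x)=\sum_{n\ge 0}a_n\lfloor n/p^h\rfloor!\binom{x}{n}$ with $a_n\to 0$, together with the identification of the unit ball $\|f\|_h\le 1$ with the condition $a_n\in\mathbb{Z}_p$. What you have written instead is a proof sketch of the paper's main result (the Kummer-type congruence for $B_n^{(\textbf{k})}$ and $C_n^{(\textbf{k})}$), and indeed your sketch explicitly \emph{invokes} the Amice theorem as an input ("so Theorem \ref{Amice1} extends the prescription to a continuous functional..."). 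Nothing in your text bears on why the factorial weights $\lfloor n/p^h\rfloor!$ are the correct normalization, why the resulting series converges precisely on functions analytic on each disk $a+p^h\mathbb{Z}_p$, or why integrality of the coefficients is equivalent to the norm bound --- which is the actual content to be established. Using the theorem to build a distribution is not a proof of the theorem; the argument is circular as a response to this prompt.

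For what it is worth, the paper does not prove this statement either: it is quoted as a known result of Amice (Th\'eor\`eme 3 of her 1964 paper), so the expected "proof" here is essentially a citation or a sketch of Amice's argument (comparison of the valuation of $\lfloor n/p^h\rfloor!\binom{x}{n}$ on residue disks with the Taylor coefficients of $f$, plus a completeness/orthonormal-basis argument for the unit ball statement). Separately, if your text were being judged as a proof of the main theorem, it is broadly in the spirit of the paper's Section 3, but it diverges in substance: the paper does not split $\mathbb{Z}_p=\mathbb{Z}_p^\times\sqcup p\mathbb{Z}_p$ and does not place the factor $p^{2\operatorname{wt}(\textbf{k}^+)}$ inside the distribution; instead it shows $p^{-N}(x^m-x^n)$ has $\|\cdot\|_1\le 1$ (Lemma 3.1) and separately bounds $\operatorname{ord}_p$ of the moments $h(j)$ from below by $-2\operatorname{wt}(\textbf{k}^+)$ (Lemma 3.3), which is where the hypothesis $\operatorname{wt}(\textbf{k}^+)<p-1$ is actually used. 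But none of that rescues the proposal as a proof of the Amice theorem.
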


\begin{theorem}[{\cite[Theorem 2.3]{ST01}}]\label{Amice2}
Let $R$ be the set of formal power series $f(T)$ over $\mathbb{Q}_p$ which converges on the open unit disk. Then the map $D(\mathbb{Z}_p) \rightarrow R$ given by
\begin{align*}
\mu \mapsto \int_{\mathbb{Z}_p} (1+T)^x d\mu(x) \coloneqq \sum_{n=0}^\infty \int_{\mathbb{Z}_p}\dbinom{x}{n}d\mu(x)T^n
\end{align*}
is bijective. The inverse map sends $\sum_{n=0}^\infty c_n T^n \in R$ to the element of $D(\mathbb{Z}_p)$ given by
\begin{align}\label{inv}
\operatorname{LA}(\mathbb{Z}_p, \mathbb{Q}_p) \rightarrow \mathbb{Q}_p ; \ f(x)=\sum_{n=0}^\infty a_n \dbinom{x}{n} \mapsto \sum_{n=0}^\infty a_nc_n.
\end{align}
\end{theorem}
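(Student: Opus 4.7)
The plan is to establish the bijection by giving both maps explicitly and checking that they invert each other. The forward direction sends $\mu$ to the series $\sum_n c_n T^n$ with $c_n = \int_{\mathbb{Z}_p} \binom{x}{n}\, d\mu(x)$, and the first task is to verify that this series lies in $R$. For this I would apply Theorem~\ref{Amice1} to $\binom{x}{n}$ itself: for any $h$, its expansion in the Amice basis has the single nonzero coefficient $1/\lfloor n/p^h \rfloor!$, so $||\binom{x}{n}||_h = p^{\operatorname{ord}_p(\lfloor n/p^h \rfloor!)}$. Continuity of $\mu$ on each $\operatorname{LA}_h(\mathbb{Z}_p, \mathbb{Q}_p)$ then yields a constant $C_h$ with $|c_n|_p \leq C_h\, p^{\operatorname{ord}_p(\lfloor n/p^h \rfloor!)}$, which by Legendre's formula is at most a constant multiple of $p^{n/(p^h(p-1))}$. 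Given any $r < 1$, choosing $h$ with $p^{1/(p^h(p-1))}\, r < 1$ gives $|c_n|_p r^n \to 0$, so the series converges on the open unit disk.

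Injectivity is then immediate: by Theorem~\ref{Amice1}, any $f \in \operatorname{LA}_h$ is a $||\cdot||_h$-convergent combination of scalar multiples of the $\binom{x}{n}$, so a distribution vanishing on every $\binom{x}{n}$ vanishes on $\operatorname{LA}(\mathbb{Z}_p, \mathbb{Q}_p) = \bigcup_h \operatorname{LA}_h$ by continuity. For surjectivity, given $\sum c_n T^n \in R$, I would define $\mu$ by the formula (\ref{inv}) and check (a) the sum $\sum a_n c_n$ converges in $\mathbb{Q}_p$ for every $f \in \operatorname{LA}(\mathbb{Z}_p, \mathbb{Q}_p)$, and (b) the resulting $\mu$ is continuous on each $\operatorname{LA}_h$. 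Both reduce to one observation: if $f \in \operatorname{LA}_h$ with $||f||_h \leq 1$, Theorem~\ref{Amice1} writes $a_n = b_n \lfloor n/p^h \rfloor!$ with $b_n \in \mathbb{Z}_p$, hence $|a_n c_n|_p \leq |\lfloor n/p^h \rfloor!\, c_n|_p$, and evaluating $\sum c_n T^n$ at $|T|_p = p^{-1/(p^h(p-1))} < 1$ forces $|\lfloor n/p^h \rfloor!\, c_n|_p \to 0$. This both produces the convergence and supplies the continuity bound $|\mu(f)|_p \leq \bigl(\sup_n |\lfloor n/p^h \rfloor!\, c_n|_p\bigr)\, ||f||_h$.

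The two maps are inverse essentially by construction: the distribution built from $\sum c_n T^n$ evaluates $\binom{x}{m}$ to $c_m$ since the Mahler expansion of $\binom{x}{m}$ is itself, and conversely, for a given $\mu$, the distribution rebuilt from its coefficient sequence agrees with $\mu$ on every $\binom{x}{n}$ and hence on $\operatorname{LA}(\mathbb{Z}_p, \mathbb{Q}_p)$ by the injectivity argument. I expect the most delicate step to be the quantitative bookkeeping between the growth of $|\lfloor n/p^h \rfloor!|_p$ and the decay of the $c_n$: although convergence on the open unit disk is only slightly stronger than boundedness of the $c_n$, one must keep $h$ fully flexible throughout, exploiting that $\operatorname{LA}(\mathbb{Z}_p, \mathbb{Q}_p)$ is the inductive limit of the $\operatorname{LA}_h$ and that in $R$ we have room to pick any $|T|_p < 1$.
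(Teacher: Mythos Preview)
The paper does not prove this theorem at all: it is quoted verbatim from \cite[Theorem 2.3]{ST01} and used as a black box, so there is no ``paper's own proof'' to compare against. Your sketch is a faithful outline of the standard Amice argument (as in Schneider--Teitelbaum or Colmez), and the estimates you describe---matching $\operatorname{ord}_p(\lfloor n/p^h\rfloor!)\sim n/(p^h(p-1))$ against convergence at radii $|T|_p<1$---are exactly the ones that make the proof work; for the purposes of this paper, though, the result is simply cited.
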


\begin{remark}
Since $f \in \operatorname{LA}(\mathbb{Z}_p, \mathbb{Q}_p)$ is continuous on $\mathbb{Z}_p$, it follows from Theorem \ref{Mahler} that $f$ has the expansion as (\ref{inv}) and the infinite sum in (\ref{inv}) is convergent.
\end{remark}

Note that, if a formal power series $f(T) \in R$ corresponds to a $p$-adic distribution $\mu$, we have
\begin{align*}
\left((1+T)\frac{d}{dT}\right) f(T)=\int_{\mathbb{Z}_p} x(1+T)^x d\mu(x)=\sum_{n=0}^\infty \int_{\mathbb{Z}_p} x \dbinom{x}{n}d\mu(x)T^n
\end{align*}
and
\begin{align}\label{diffprop}
\left. \left((1+T)\frac{d}{dT}\right)^n f(T) \right|_{T=0} = \int_{\mathbb{Z}_p} x^n d\mu (x)
\end{align}
for $n \geq 0$. Indeed, we can check these by using the property
\begin{align*}
x \dbinom{x}{n}=(n+1)\dbinom{x}{n+1}+n\dbinom{x}{n}.
\end{align*}

\section{Proof of Theorem \ref{main}}

In this section, we will prove Theorem \ref{main}. Our proof is inspired by the proof of \cite[Theorem 12]{Ki12}. In the following, let $p$ be an odd prime.

For positive integers $m, n$ and $N$, by applying Theorem \ref{Amice1} to the case $h=1$ and $p^{-N}(x^m-x^n) \in \operatorname{LA}_1(\mathbb{Z}_p, \mathbb{Q}_p)$, we obtain $a_j \in \mathbb{Q}_p$ satisfying
\begin{align*}
\frac{x^m-x^n}{p^N}=\sum_{j=0}^\infty a_j \left\lfloor \frac{j}{p} \right\rfloor ! \dbinom{x}{j}
\end{align*}
and $|a_j|_p \to 0$ as $j \to \infty$. 

\begin{lemma}\label{prelemma}
If $m, n \geq N$ and $m \equiv n \bmod (p-1)p^{N-1}$, then we have $a_j \in \mathbb{Z}_p$ for any $j \geq 0$.
\end{lemma}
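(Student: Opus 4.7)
The plan is to apply the second statement of Theorem \ref{Amice1}: since $p^{-N}(x^m-x^n)$ is a polynomial it certainly lies in $\operatorname{LA}_1(\mathbb{Z}_p,\mathbb{Q}_p)$, and the coefficients $a_j$ in the given expansion lie in $\mathbb{Z}_p$ exactly when $\|p^{-N}(x^m-x^n)\|_1\le 1$. Unfolding the definition of the norm, this reduces to showing that for every $a\in\mathbb{Z}_p$ and every integer $k\ge 0$, the Taylor coefficient $\alpha_k$ at $a$ satisfies $|p^k\alpha_k|_p\le 1$. The Taylor expansion of the polynomial $x^j$ around $a$ gives $\alpha_k = p^{-N}\bigl(\binom{m}{k}a^{m-k}-\binom{n}{k}a^{n-k}\bigr)$ (with the convention $\binom{r}{k}=0$ when $k>r$), so it suffices to establish
$$\binom{m}{k}a^{m-k}\equiv\binom{n}{k}a^{n-k}\pmod{p^{N-k}}$$
for all $a\in\mathbb{Z}_p$ and all $k\ge 0$.

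When $k\ge N$ the congruence is automatic since the left-hand side lies in $\mathbb{Z}_p$ and $p^{N-k}\mathbb{Z}_p\supseteq\mathbb{Z}_p$. For $0\le k<N$ I would split on whether $a$ is a unit. If $a\in p\mathbb{Z}_p$, then because $m,n\ge N$ both $a^{m-k}$ and $a^{n-k}$ already lie in $p^{N-k}\mathbb{Z}_p$. If $a\in\mathbb{Z}_p^\times$, I would decompose
$$\binom{m}{k}a^{m-k}-\binom{n}{k}a^{n-k} = \Bigl(\binom{m}{k}-\binom{n}{k}\Bigr)a^{m-k} + \binom{n}{k}\bigl(a^{m-k}-a^{n-k}\bigr)$$
and control each summand separately.

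The second summand is handled by Euler's theorem: since $(p-1)p^{N-k-1}$ divides $(p-1)p^{N-1}$, the hypothesis forces $m-k\equiv n-k\pmod{(p-1)p^{N-k-1}}$, hence $a^{m-k}\equiv a^{n-k}\pmod{p^{N-k}}$ for every $a\in\mathbb{Z}_p^\times$. The main obstacle is the binomial congruence $\binom{m}{k}\equiv\binom{n}{k}\pmod{p^{N-k}}$ needed for the first summand. Setting $f_k(x)=x(x-1)\cdots(x-k+1)$, the divisibility $(x-y)\mid(f_k(x)-f_k(y))$ in $\mathbb{Z}[x,y]$ combined with $m\equiv n\pmod{p^{N-1}}$ yields $f_k(m)-f_k(n)\in p^{N-1}\mathbb{Z}$. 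Dividing by $k!$ and invoking the bound $\operatorname{ord}_p(k!)\le(k-1)/(p-1)\le k-1$ (from Legendre's formula, valid for $k\ge 1$ and odd $p$) then produces $\operatorname{ord}_p\bigl(\binom{m}{k}-\binom{n}{k}\bigr)\ge(N-1)-(k-1)=N-k$, as required. The delicate balance—namely that the $p^{N-1}$ divisibility coming from the Kummer-type hypothesis compensates exactly for the $p$-power lost to $k!$—is the crux of the argument and the only place where sharp bookkeeping is essential.
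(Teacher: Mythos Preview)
Your argument is correct. Both you and the paper reduce to the norm criterion $\|p^{-N}(x^m-x^n)\|_1\le 1$ from Theorem~\ref{Amice1}, but you verify it differently. The paper rephrases the condition as $P(c+py)\in\mathbb{Z}_p[y]$ for each residue $c\in\{0,1,\dots,p-1\}$ and, for $c\ne 0$, factors $P(c+py)=p^{-N}(c+py)^n\bigl((c+py)^{(p-1)p^{N-1}d}-1\bigr)$, then proves $(c+py)^{(p-1)p^{N-1}}\equiv 1\pmod{p^N\mathbb{Z}_p[y]}$ by a short induction on $N$ (essentially a lifting-the-exponent step in the polynomial ring). Your approach instead expands the Taylor coefficients at each $a\in\mathbb{Z}_p$ and splits the resulting expression into a power part (handled by Euler's theorem) and a binomial part (handled via Legendre's bound $\operatorname{ord}_p(k!)\le k-1$). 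The paper's route is slightly more economical in that one factorization and one induction dispose of everything at once, while your route trades that for only off-the-shelf facts about $\mathbb{Z}_p$ and avoids any polynomial-ring induction. Both are equally valid and of comparable length.
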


\begin{proof}
Put $P(x)=p^{-N}(x^m-x^n)$. According to Theorem \ref{Amice1}, we must prove $||P(x)||_1 \leq 1$ and it suffices to show that $Q(y) \coloneqq P(c+py) \in \mathbb{Z}_p[y]$ for any $c=0, 1, \cdots, p-1$. If $c=0$, it is clear.

Suppose that $c \neq 0$. We put $m-n=(p-1)p^{N-1}d$ with $d \in \mathbb{Z}_{>0}$ and
\begin{align*}
Q(y)&=p^{-N}(c+py)^n\{(c+py)^{(p-1)p^{N-1}d}-1\}.
\end{align*}
We will check that $(c+py)^{(p-1)p^{N-1}d} \equiv 1 \bmod p^N\mathbb{Z}_p[y]$ by induction on $N$. When $N=1$, we see that $(c+py)^{(p-1)d} \equiv c^{(p-1)d} \equiv 1 \bmod p\mathbb{Z}_p[y]$. Let $N >0$ and suppose that the assertion holds for $N$. Then there exists a polynomial $R_N(y) \in \mathbb{Z}_p[y]$ such that $(c+py)^{(p-1)p^{N-1}d}=1+p^NR_N(y)$ and we have
\begin{align*}
(c+py)^{(p-1)p^Nd}&=(1+p^NR_N(y))^p \\
                          &=\sum_{i=0}^p \binom{p}{i} p^{Ni}R_N(y)^i \equiv 1 \bmod p^{N+1}\mathbb{Z}_p[y].
\end{align*}
This completes the proof.
\end{proof}

\begin{proof}[Proof of Theorem \ref{main}]
We omit the proof for $C_n^{(\textbf{k})}$ because it can be checked by the same argument as the following proof for $B_n^{(\textbf{k})}$. Put
\begin{align*}
f(x)=\frac{\operatorname{Li}_{\textbf{k}}(1-e^x)}{1-e^x}
\end{align*}
and $g(T)=f(\log (1+T))$. In other words, we set
\begin{align*}
f(x)&=\sum_{0<m_1<\cdots<m_r}\frac{(1-e^x)^{m_r-1}}{m_1^{k_1}\cdots m_r^{k_r}} = \sum_{n=0}^\infty (-1)^nB_n^{(\textbf{k})} \frac{x^n}{n!}, \\
g(T)&=\sum_{0<m_1<\cdots<m_r} \frac{(-1)^{m_r-1}}{m_1^{k_1}\cdots m_r^{k_r}}T^{m_r-1}.
\end{align*}
We can check that $g(T)$ converges on the open unit disk. Indeed, since we have
\begin{align*}
\left|\sum_{0<m_1<\cdots<m_r} \frac{(-1)^{m_r-1}}{m_1^{k_1}\cdots m_r^{k_r}}\right|_p \leq m_r^{\operatorname{wt}(\textbf{k}^+)},
\end{align*}
it follows that
\begin{align*}
\limsup_{m_r \to \infty} \left|\sum_{0<m_1<\cdots<m_r} \frac{(-1)^{m_r-1}}{m_1^{k_1}\cdots m_r^{k_r}}\right|_p^{\frac{1}{m_r}} =1.
\end{align*}
Using Theorem \ref{Amice2}, we get a $p$-adic distribution $\mu$ corresponding to $g$. The $p$-adic distribution $\mu : \operatorname{LA}(\mathbb{Z}_p,\mathbb{Q}_p) \rightarrow \mathbb{Q}_p$ is given by
\begin{align*}
\varphi \mapsto \sum_{j=r-1}^\infty (-1)^ja_j \sum_{0<m_1<\cdots<m_{r-1}<j+1} \frac{1}{m_1^{k_1}\cdots m_{r-1}^{k_{r-1}}(j+1)^{k_r}},
\end{align*}
where $\varphi$ has the expansion $\varphi(x)=\sum_{j=0}^\infty a_j \binom{x}{j}$. According to (\ref{diffprop}), we obtain that
\begin{align*}
\int_{\mathbb{Z}_p} x^n d\mu(x) &= \left. \left((1+T)\frac{d}{dT}\right)^n g(T) \right|_{T=0} \\
                                           &= \left. \left(\frac{d}{dx}\right)^n f(x) \right|_{x=0} = (-1)^n B_n^{(\textbf{k})}
\end{align*}
for $n \geq 0$.

For positive integers $m, n$ and $N$ with $m \equiv n \bmod (p-1)p^{N-1}$, Theorem \ref{Amice1} implies that there exist $a_j \in \mathbb{Q}_p$ such that
\begin{align*}
\frac{x^m-x^n}{p^N}=\sum_{j=0}^\infty a_j \left\lfloor \frac{j}{p} \right\rfloor ! \dbinom{x}{j}
\end{align*}
and $|a_j|_p \to 0$ as $j \to \infty$. Then we have $a_j \in \mathbb{Z}_p$ for any $j \geq 0$ by Lemma \ref{prelemma}. We see that
\begin{align*}
\int_{\mathbb{Z}_p} \frac{x^m-x^n}{p^N} d\mu(x) &= \sum_{j=0}^\infty a_j \left\lfloor \frac{j}{p} \right\rfloor ! \int_{\mathbb{Z}_p} \dbinom{x}{j} d\mu(x) \\
                                                                   &= \sum_{j=0}^\infty (-1)^ja_j \left\lfloor \frac{j}{p} \right\rfloor ! \sum_{0<m_1<\cdots<m_{r-1}<j+1} \frac{1}{m_1^{k_1}\cdots m_{r-1}^{k_{r-1}}(j+1)^{k_r}}.
\end{align*}
Put
\begin{align}\label{defh}
h(j)=\left\lfloor \frac{j}{p} \right\rfloor ! \sum_{0<m_1<\cdots<m_{r-1}<j+1} \frac{1}{m_1^{k_1}\cdots m_{r-1}^{k_{r-1}}(j+1)^{k_r}}
\end{align}
for $j \geq r-1$. Note that the summation in the R.H.S. of  (\ref{defh}) is empty for $0 \leq j \leq r-2$ and understood to be 0. We will prove the following lemma soon later.

\begin{lemma}\label{keylemma}
If $\operatorname{wt}(\textbf{k}^+) < p-1$, then we have
\begin{align*}
\min_{j \geq r-1} \{ \operatorname{ord}_p(h(j)) \} \geq -2\operatorname{wt}(\textbf{k}^+).
\end{align*}
\end{lemma}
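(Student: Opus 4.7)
The plan is to apply the ultrametric inequality to the inner sum of $h(j)$ and reduce the lemma to a purely numerical comparison. Set $w = \operatorname{wt}(\textbf{k}^+)$, $\ell = j+1$, $A = \lfloor \log_p \ell \rfloor$, and let $m_r := \ell$. Each summand of the sum in (\ref{defh}) has $p$-adic valuation $-\sum_{i=1}^r k_i \operatorname{ord}_p(m_i)$. Since any index with $k_i < 0$ contributes $-k_i \operatorname{ord}_p(m_i) \geq 0$, this valuation is bounded below by $-\sum_{i=1}^r k_i' \operatorname{ord}_p(m_i)$; and since $m_i \leq \ell$ forces $\operatorname{ord}_p(m_i) \leq A$, it is in fact at least $-wA$. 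The strong triangle inequality then yields
\begin{equation*}
\operatorname{ord}_p(h(j)) \geq \operatorname{ord}_p\bigl(\lfloor j/p \rfloor !\bigr) - wA,
\end{equation*}
so the lemma is reduced to proving $\operatorname{ord}_p(\lfloor j/p \rfloor !) \geq w(A-2)$.

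For $A \leq 2$ the inequality is immediate. For $A \geq 3$ we have $\ell \geq p^A$, hence $\lfloor j/p \rfloor \geq p^{A-1} - 1$, and Legendre's formula gives
\begin{equation*}
\operatorname{ord}_p\bigl((p^{A-1}-1)!\bigr) = \bigl(1 + p + p^2 + \cdots + p^{A-2}\bigr) - (A-1).
\end{equation*}
Consequently it suffices to establish the purely numerical inequality
\begin{equation*}
1 + p + p^2 + \cdots + p^{A-2} \geq (A-1) + w(A-2) \qquad (A \geq 3),
\end{equation*}
which I would prove by induction on $A$. The base case $A = 3$ reads $p - 1 \geq w$, which is supplied by the hypothesis $w < p - 1$; in the inductive step the left-hand side grows by $p^{A-1}$, while the right-hand side grows by only $1 + w \leq p - 1 \leq p^{A-1}$.

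The main obstacle is that the ultrametric estimate is coarse: it discards every cancellation among the terms of the inner sum, and the individual bound $\operatorname{ord}_p(m_i) \leq A$ is itself not always sharp. Their combination is nevertheless just tight at the base of the induction --- precisely the place where the hypothesis $\operatorname{wt}(\textbf{k}^+) < p - 1$ becomes indispensable --- while for $A \geq 4$ the exponential growth of $1 + p + p^2 + \cdots + p^{A-2}$ easily dominates the linear right-hand side.
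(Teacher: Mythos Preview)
Your proof is correct and considerably cleaner than the paper's. Both arguments begin the same way---apply the ultrametric inequality to the inner sum and bound each $\operatorname{ord}_p(m_i)$ from above---but the bookkeeping diverges from there. The paper writes $j = ap+i$, reduces to the case $a \equiv p-1 \pmod p$, sets $a = qp^l-1$, and then splits into the two regimes $1 \le q \le p-1$ and $q \ge p+1$, treating each with a separate chain of inequalities involving the $p$-adic digits of $q$. Your single parameter $A = \lfloor \log_p(j+1)\rfloor$ absorbs all of that case structure: the uniform bound $\operatorname{ord}_p(m_i)\le A$ collapses the problem to $\operatorname{ord}_p\bigl((p^{A-1}-1)!\bigr) \ge w(A-2)$, and the induction on $A$ handles every $j$ at once. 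The price is that your estimate is coarser term-by-term, but since both proofs land on the same final bound $-2\operatorname{wt}(\textbf{k}^+)$, nothing is lost. The paper's finer decomposition would only pay off if one were trying to sharpen the exponent $2\operatorname{wt}(\textbf{k}^+)$, which neither proof attempts.
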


It follows from the above lemma that
\begin{align*}
p^{2\operatorname{wt}(\textbf{k}^+)}\int_{\mathbb{Z}_p} \frac{x^m-x^n}{p^N} d\mu = p^{2\operatorname{wt}(\textbf{k}^+)-N} \left\{(-1)^m B_m^{(\textbf{k})}-(-1)^n B_n^{(\textbf{k})}\right\} \in \mathbb{Z}_p.
\end{align*}
It is equivalent to the congruence
\begin{align*}
p^{2\operatorname{wt}(\textbf{k}^+)}B_m^{(\textbf{k})} \equiv p^{2\operatorname{wt}(\textbf{k}^+)}B_n^{(\textbf{k})} \bmod p^N.
\end{align*}
\end{proof}

We will show Lemma \ref{keylemma}.

\begin{proof}[Proof of Lemma \ref{keylemma}]
Let $\textbf{k}=(k_1, \cdots , k_r)$. For $j \leq p-1$, we see that $\operatorname{ord}_p(h(j))\geq -k_r$. Set $j=ap+i \ (\geq p)$ with $a \geq 1$ and $0 \leq i \leq p-1$. Then we have
\begin{align*}
&\min_{0 \leq i \leq p-1} \left\{ \operatorname{ord}_p(h(ap+i))\right\} \\
=&\min_{0 \leq i \leq p-1}\left\{ \operatorname{ord}_p(a!)-k_r\operatorname{ord}_p(ap+i+1)+\operatorname{ord}_p\left(\sum_{0<m_1<\cdots<m_{r-1}<ap+i+1} \frac{1}{m_1^{k_1}\cdots m_{r-1}^{k_{r-1}}}\right) \right\} \\
\geq &\min_{0 \leq i \leq p-1} \left\{ \operatorname{ord}_p(a!)-k_r'\operatorname{ord}_p(ap+i+1)+\min_{0<m_1<\cdots<m_{r-1}<ap+i+1} \left\{-\sum_{s=1}^{r-1}k_s'\operatorname{ord}_p(m_s)\right\} \right\} \\
=&\operatorname{ord}_p(a!)-k_r'\operatorname{ord}_p(a+1)-\max_{0<m_1<\cdots<m_{r-1}<(a+1)p}\left\{\sum_{s=1}^{r-1}k_s'\operatorname{ord}_p(m_s)\right\}-k_r' \\
\geq & \operatorname{ord}_p(a!)-k_r'\operatorname{ord}_p(a+1)-\max_{0<b_1<\cdots<b_{r-1}\leq a}\left\{\sum_{s=1}^{r-1}k_s'\operatorname{ord}_p(b_s)\right\}-\operatorname{wt}(\textbf{k}^+) \eqqcolon F(a).
\end{align*}
It is enough to prove that $\min_{a \geq 1}\{F(a)\} \geq -2\operatorname{wt}(\textbf{k}^+)$. For $t \geq 0$ and $0 \leq u \leq p-1$, since we see that
\begin{align*}
\operatorname{ord}_p((tp+u)!)=\operatorname{ord}_p((tp+p-1)!)
\end{align*}
and
\begin{align*}
\max_{0<b_1<\cdots<b_{r-1}\leq tp+u}\left\{\sum_{s=1}^{r-1}k_s'\operatorname{ord}_p(b_s)\right\} \leq \max_{0<b_1<\cdots<b_{r-1}\leq tp+p-1}\left\{\sum_{s=1}^{r-1}k_s'\operatorname{ord}_p(b_s)\right\},
\end{align*}
it suffices to check the case $a \equiv p-1 \bmod p$. Putting $a=qp^l-1$ with $l \geq 1$, $q \geq 1$ and $p \nmid q$, we have
\begin{align*}
&F(qp^l-1) \\
=&\operatorname{ord}_p\left(\frac{(qp^l)!}{qp^l}\right)-k_r'\operatorname{ord}_p(qp^l)-\max_{0<b_1<\cdots<b_{r-1}\leq qp^l-1}\left\{\sum_{s=1}^{r-1}k_s'\operatorname{ord}_p(b_s)\right\}-\operatorname{wt}(\textbf{k}^+) \\
=&\operatorname{ord}_p((qp^l)!)-(k_r'+1)\operatorname{ord}_p(qp^l)-\max_{0<b_1<\cdots<b_{r-1}\leq qp^l-1}\left\{\sum_{s=1}^{r-1}k_s'\operatorname{ord}_p(b_s)\right\}-\operatorname{wt}(\textbf{k}^+) \\
=&q\frac{p^l-1}{p-1}+\operatorname{ord}_p(q!)-(k_r'+1)l-\max_{0<b_1<\cdots<b_{r-1}\leq qp^l-1}\left\{\sum_{s=1}^{r-1}k_s'\operatorname{ord}_p(b_s)\right\}-\operatorname{wt}(\textbf{k}^+).
\end{align*}
If $1 \leq q \leq p-1$, since $b_s \leq (p-1)p^l-1<p^{l+1}$ and $\operatorname{ord}_p(b_s) \leq l$ for $1 \leq s \leq r-1$, we find that
\begin{align*}
F(qp^l-1)&=q\frac{p^l-1}{p-1}-(k_r'+1)l-\max_{0<b_1<\cdots<b_{r-1}\leq qp^l-1}\left\{\sum_{s=1}^{r-1}k_s'\operatorname{ord}_p(b_s)\right\}-\operatorname{wt}(\textbf{k}^+) \\
&\geq q\frac{p^l-1}{p-1}-(k_r'+1)l-\left(\sum_{s=1}^{r-1} k_s'\right)l-\operatorname{wt}(\textbf{k}^+) \\
&\geq \frac{p^l-1}{p-1}-(\operatorname{wt}(\textbf{k}^+) +1)l-\operatorname{wt}(\textbf{k}^+) \\
&
\begin{cases}
=-2\operatorname{wt}(\textbf{k}^+) & \text{if} \ l=1 \\
\geq p+1-2(\operatorname{wt}(\textbf{k}^+) +1)-(p-2) & \text{if} \ l \geq 2
\end{cases} \\
& \geq -2\operatorname{wt}(\textbf{k}^+).
\end{align*}
Note that we used the assumption $\operatorname{wt}(\textbf{k}^+)<p-1$ in the case $l \geq 2$. 

If $q \geq p+1$, set $q=\sum_{i=0}^d c_i p^i$ with $0 \leq c_i \leq p-1$, $c_0c_d \neq 0$ and $d \geq 1$. Then it follows that
\begin{align*}
&F(qp^l-1) \\
\geq &\frac{p^l-1}{p-1}\sum_{i=0}^d c_i p^i+\frac{1}{p-1}\sum_{i=1}^d c_i (p^i-1)-(k_r'+1)l-\left(\sum_{s=1}^{r-1} k_s'\right)(d+l)-\operatorname{wt}(\textbf{k}^+) \\
\geq &\frac{p^l-1}{p-1}(p^d+1)+\frac{p^d-1}{p-1}-(\operatorname{wt}(\textbf{k}^+)+1)l-\left(\sum_{s=1}^{r-1} k_s'\right)d-\operatorname{wt}(\textbf{k}^+) \\
=&\frac{p^{l+d}+p^l-2}{p-1}-(\operatorname{wt}(\textbf{k}^+)+1)l-\left(\sum_{s=1}^{r-1} k_s'\right)d-\operatorname{wt}(\textbf{k}^+) \\
\geq &\frac{p^{d+1}+p-2}{p-1}-\left(\sum_{s=1}^{r-1} k_s'\right)d-2\operatorname{wt}(\textbf{k}^+)-1 \\
=&\left(1+\frac{1}{p-1}\right)p^d-\left(\sum_{s=1}^{r-1} k_s'\right)d-2\operatorname{wt}(\textbf{k}^+)-\frac{1}{p-1} \\
\geq &\left(1+\frac{1}{p-1}\right)p-\sum_{s=1}^{r-1} k_s'-2\operatorname{wt}(\textbf{k}^+)-\frac{1}{p-1} \\
=&\left(p-\sum_{s=1}^{r-1} k_s'\right)+1-2\operatorname{wt}(\textbf{k}^+)>-2\operatorname{wt}(\textbf{k}^+).
\end{align*}
This completes the proof.
\end{proof}

\begin{remark}\label{explicit}
We obtain the explicit formula of $B_n^{(\textbf{k})}$ by using the $p$-adic distribution $\mu$ in the proof of Theorem \ref{main} as follows. For $n \geq 0$, it is known that we have
\begin{align*}
x^n=\sum_{j=0}^n \st{n}{j} j! \dbinom{x}{j},
\end{align*}
where, for any integers $a$ and $b$, $\st{a}{b}$ are called the Stirling numbers of the second kind and defined by the recurrence formula
\begin{align*}
\st{a+1}{b}=\st{a}{b-1}+b\st{a}{b}
\end{align*}
with the conditions $\st{0}{0}=1$ and $\st{a}{b}=0$ for $a<b$ (\cite[Definition 2.2, Proposition 2.6]{AIK}). Then we find that
\begin{align*}
B_n^{(\textbf{k})}&=(-1)^n\int_{\mathbb{Z}_p} x^n d\mu(x)=(-1)^n\sum_{j=0}^n \st{n}{j} j! \int_{\mathbb{Z}_p}\dbinom{x}{j} d\mu(x) \\
                      &=(-1)^n\sum_{j=0}^n \st{n}{j} j! \sum_{0<m_1<\cdots<m_{r-1}<j+1} \frac{(-1)^j}{m_1^{k_1}\cdots m_{r-1}^{k_{r-1}}(j+1)^{k_r}} \\
                      &=(-1)^n\sum_{0<m_1<\cdots<m_{r-1}<m_r\leq n+1} \frac{(-1)^{m_r-1}(m_r-1)!\st{n}{m_r-1}}{m_1^{k_1}\cdots m_{r-1}^{k_{r-1}}m_r^{k_r}}.
\end{align*}
By the exactly same way, we get
\begin{align*}
C_n^{(\textbf{k})}=(-1)^n\sum_{0<m_1<\cdots<m_{r-1}<m_r\leq n+1} \frac{(-1)^{m_r-1}(m_r-1)!\st{n+1}{m_r}}{m_1^{k_1}\cdots m_{r-1}^{k_{r-1}}m_r^{k_r}}.
\end{align*}
These formulas were proved in \cite[Theorem 3]{IKT} by using the generating functions.
\end{remark}

\begin{remark}
 It was claimed in \cite[Theorem 13]{Ki12} that, given an odd prime $p$ and positive integers $m, n, k, N$ with $p \ge \max\{k+2, (N+k)/2\}$ and $m \equiv n \bmod (p-1)p^N$, one has $p^kB_m^{(k)} \equiv p^kB_n^{(k)} \bmod p^N$. However, there are counterexamples: $pB_1^{(1)}=p/2 \not\equiv 0=pB_m^{(1)} \bmod p^N$ for $N \ge 2$ and $m=(p-1)p^N+1$. (Its proof breaks down at \cite[Proposition 11]{Ki12}, for which $j=p^2+p-1$ yields a counterexample.)
\end{remark}

\section{Multi-poly-Bernoulli-star numbers}

At the end of this paper, we will give Kummer-type congruences for other Bernoulli numbers.

\begin{definition}[{\cite[Section 1]{Im14}}]
For $\textbf{k}=(k_1, \cdots , k_r) \in \mathbb{Z}^r$, define the non-strict multiple polylogarithm to be
\begin{align*}
\operatorname{Li}_{\textbf{k}}^{\star}(t)=\sum_{0<m_1\leq \cdots \leq m_r}\frac{t^{m_r}}{m_1^{k_1}\cdots m_r^{k_r}}.
\end{align*}
The multi-poly-Bernoulli-star numbers $B_{n, \star}^{(\textbf{k})}$ and $C_{n, \star}^{(\textbf{k})}$ are defined to be the rational numbers satisfying
\begin{align*}
\frac{\operatorname{Li}_{\textbf{k}}^{\star}(1-e^{-t})}{1-e^{-t}}&=\sum_{n=0}^\infty B_{n, \star}^{(\textbf{k})} \frac{t^n}{n!}, \\
\frac{\operatorname{Li}_{\textbf{k}}^{\star}(1-e^{-t})}{e^t-1}&=\sum_{n=0}^\infty C_{n, \star}^{(\textbf{k})} \frac{t^n}{n!}
\end{align*}
respectively, as formal power series over $\mathbb{Q}$.
\end{definition}

\begin{remark}
Similar relations to Remark \ref{BandC} were proved in \cite[Propositions 2.3, 2.4]{Im14}. Furthermore, the multi-poly-Bernoulli-star numbers $B_{n, \star}^{(\textbf{k})}$ and $C_{n, \star}^{(\textbf{k})}$ verify a duality relation for $\textbf{k}=(k_1, \cdots , k_r) \in \mathbb{Z}_{>0}^r$ (\cite[Theorem 3.2]{Im14}).
\end{remark}

\begin{remark}
It is known that the multi-poly-Bernoulli-star numbers $C_{n, \star}^{(\textbf{k})}$ describe finite multiple zeta-star values (\cite[Section 4]{Im14}).
\end{remark}

The following theorem can be shown by the exactly same argument as Theorem \ref{main} and hence is omitted.

\begin{theorem}
Let $\textbf{k} \in \mathbb{Z}^r$ be an index, $p$ be an odd prime and $m$, $n$ and $N$ be positive integers with $m, n \geq N$ and $\operatorname{wt}(\textbf{k}^+) < p-1$. If $m \equiv n \bmod (p-1)p^{N-1}$, then we have
\begin{align*}
p^{2\operatorname{wt}(\textbf{k}^+)}B_{m, \star}^{(\textbf{k})} &\equiv p^{2\operatorname{wt}(\textbf{k}^+)}B_{n, \star}^{(\textbf{k})} \bmod p^N, \\
p^{2\operatorname{wt}(\textbf{k}^+)}C_{m, \star}^{(\textbf{k})} &\equiv p^{2\operatorname{wt}(\textbf{k}^+)}C_{n, \star}^{(\textbf{k})} \bmod p^N.
\end{align*}
\end{theorem}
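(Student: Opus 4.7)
The plan is to mirror the proof of Theorem \ref{main}, with the non-strict polylogarithm $\operatorname{Li}_{\textbf{k}}^{\star}$ replacing the strict $\operatorname{Li}_{\textbf{k}}$ throughout. First we define
\begin{align*}
f_\star(x) = \frac{\operatorname{Li}_{\textbf{k}}^{\star}(1-e^x)}{1-e^x}, \qquad g_\star(T) = f_\star(\log(1+T)) = \sum_{0 < m_1 \leq \cdots \leq m_r} \frac{(-1)^{m_r-1}}{m_1^{k_1}\cdots m_r^{k_r}} T^{m_r-1}.
\end{align*}
Since every index in this sum still satisfies $m_s \leq m_r$, the $p$-adic absolute value of the coefficient of $T^{M-1}$ is again bounded by $M^{\operatorname{wt}(\textbf{k}^+)}$, so $g_\star(T)$ converges on the open unit disk. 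Theorem \ref{Amice2} will then provide a $p$-adic distribution $\mu_\star$ corresponding to $g_\star$, and the identity $(\ref{diffprop})$ will yield $\int_{\mathbb{Z}_p} x^n \, d\mu_\star(x) = (-1)^n B_{n, \star}^{(\textbf{k})}$.

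Next, assuming $m \equiv n \bmod (p-1)p^{N-1}$ with $m, n \geq N$, we will invoke Lemma \ref{prelemma} to obtain an expansion $(x^m - x^n)/p^N = \sum_{j \geq 0} a_j \lfloor j/p \rfloor! \binom{x}{j}$ with $a_j \in \mathbb{Z}_p$. Integrating against $\mu_\star$ and reading off $\int \binom{x}{j} d\mu_\star$ from the power series of $g_\star$, the desired congruence for $B_{n, \star}^{(\textbf{k})}$ reduces to the star analog of Lemma \ref{keylemma}, namely
\begin{align*}
\min_{j \geq 0} \operatorname{ord}_p(h_\star(j)) \geq -2\operatorname{wt}(\textbf{k}^+), \quad h_\star(j) = \left\lfloor \frac{j}{p} \right\rfloor! \sum_{0 < m_1 \leq \cdots \leq m_{r-1} \leq j+1} \frac{1}{m_1^{k_1} \cdots m_{r-1}^{k_{r-1}}(j+1)^{k_r}}.
\end{align*}

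To establish this estimate we copy the proof of Lemma \ref{keylemma} with each strict chain $0 < b_1 < \cdots < b_{r-1} \leq a$ replaced by the non-strict chain $0 < b_1 \leq \cdots \leq b_{r-1} \leq a$. The only input about the chain used in that proof is the individual bound $\operatorname{ord}_p(b_s) \leq l$ for $b_s < p^{l+1}$, which is insensitive to distinctness of the $b_s$; thus the case analysis on $a = qp^l - 1$ (separately for $1 \leq q \leq p-1$ and for $q \geq p+1$), together with the hypothesis $\operatorname{wt}(\textbf{k}^+) < p-1$, carries over verbatim. The proof for $C_{n, \star}^{(\textbf{k})}$ will be identical once $f_\star$ is replaced by $\tilde f_\star(x) = -\operatorname{Li}_{\textbf{k}}^{\star}(1-e^x)/(1-e^{-x})$, obtained from the $C$-generating function by $t = -x$; its Taylor series at $x = 0$ is $\sum_{n \geq 0} (-1)^n C_{n, \star}^{(\textbf{k})} x^n/n!$ and the associated series in $T = e^x - 1$ enjoys the same polynomial coefficient bound. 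The only (and mild) obstacle in the whole argument is checking that permitting repetitions in the chains does not worsen the key $p$-adic estimate of Lemma \ref{keylemma}; as just noted, this is automatic because the estimates there are applied termwise.
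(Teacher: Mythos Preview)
Your proposal is correct and matches the paper's own approach: the paper simply states that the theorem ``can be shown by the exactly same argument as Theorem \ref{main} and hence is omitted,'' and your write-up supplies precisely that argument, replacing strict chains by non-strict ones and correctly observing that the termwise bounds $\operatorname{ord}_p(b_s)\le l$ (resp.\ $\le d+l$) in the proof of Lemma \ref{keylemma} are indifferent to whether the $b_s$ are distinct. Your treatment of the $C_{n,\star}^{(\mathbf{k})}$ case via $\tilde f_\star$ is also in line with how the paper handles $C_n^{(\mathbf{k})}$ (again by the same argument, omitted there as well).
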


\begin{remark}
We can check the following formulas
\begin{align*}
B_{n, \star}^{(\textbf{k})}&=(-1)^n\sum_{0<m_1\leq\cdots \leq m_{r-1} \leq m_r\leq n+1} \frac{(-1)^{m_r-1}(m_r-1)!\st{n}{m_r-1}}{m_1^{k_1}\cdots m_{r-1}^{k_{r-1}}m_r^{k_r}}, \\
C_{n, \star}^{(\textbf{k})}&=(-1)^n\sum_{0<m_1\leq \cdots \leq m_{r-1} \leq m_r\leq n+1} \frac{(-1)^{m_r-1}(m_r-1)!\st{n+1}{m_r}}{m_1^{k_1}\cdots m_{r-1}^{k_{r-1}}m_r^{k_r}}
\end{align*}
by the same computation as Remark \ref{explicit}. These were obtained in \cite[Proposition 2.2]{Im14} by using the generating functions.
\end{remark}

\begin{bibdiv}
\begin{biblist}

\bib{Am64}{article}{
   author={Amice, Yvette},
   title={Interpolation $p$-adique},
   journal={Bull. Soc. Math. France},
   volume={92},
   date={1964},
   pages={117-180},
}

\bib{AIK}{book}{
   author={Arakawa, Tsuneo},
   author={Ibukiyama, Tomoyoshi},
   author={Kaneko, Masanobu},
   title={Bernoulli Numbers and Zeta Functions},
   note={with appendix by Don Zagier},
   series={Springer Monographs in Mathematics},
   publisher={Springer, Tokyo},
   date={2014},
}

\bib{AK99}{article}{
   author={Arakawa, Tsuneo},
   author={Kaneko, Masanobu},
   title={Multiple zeta values, poly-Bernoulli numbers, and related zeta functions},
   journal={Nagoya Math. J.},
   volume={153},
   date={1999},
   pages={189-209},
}

\bib{Im14}{article}{
   author={Imatomi, Kohtaro},
   title={Multi-poly-Bernoulli-star numbers and finite multiple zeta-star
   values},
   journal={Integers},
   volume={14},
   date={2014},
   pages={Paper No. A51, 10},
}

\bib{IKT}{article}{
   author={Imatomi, Kohtaro},
   author={Kaneko, Masanobu},
   author={Takeda, Erika},
   title={Multi-Poly-Bernoulli Numbers and Finite Multiple Zeta Values},
   journal={J. of Integer Seq.},
   volume={17},
   date={2014},
   pages={1-12},
}

\bib{Ka97}{article}{
   author={Kaneko, Masanobu},
   title={Poly-Bernoulli numbers},
   journal={Journal de Th\'eorie des Nombres de Bordeaux},
   volume={9},
   date={1997},
   pages={199-206},
}

\bib{Ki12}{article}{
   author={Kitahara, Reito},
   title={On Kummer-type congruences for poly-Bernoulli numbers (in Japanese)},
   journal={Tohoku University, master thesis},
   date={2012},
}

\bib{Ma58}{article}{
   author={Mahler, Kurt},
   title={An interpolation series for continuous functions of a $p$-adic variable},
   journal={J. Reine Angew. Math.},
   volume={199},
   date={1958},
   pages={23-34},
}

\bib{Sa14}{article}{
   author={Sakata, Mika},
   title={On $p$-adic properties of poly-Bernoulli numbers (in Japanese)},
   journal={Kindai University, master thesis},
   date={2014},
}

\bib{ST01}{article}{
   author={Schneider, Peter},
   author={Teitelbaum, Jeremy},
   title={$p$-adic Fourier theory},
   journal={Doc. Math.},
   volume={6},
   date={2001},
   pages={447-481},
}

\end{biblist}
\end{bibdiv}

\vspace{10pt}
\noindent
Mathematical Institute, Graduate School of Science, Tohoku University,\\
6-3 Aramakiaza, Aoba, Sendai, Miyagi 980-8578, Japan.\\
E-mail address: \textbf{yu.katagiri.s3@dc.tohoku.ac.jp}

\end{document}